\newtheorem{teo}{Theorem}
\newtheorem{cor}[teo]{Corollary}
\newtheorem{defi}[teo]{Definition}
\newtheorem{ques}[teo]{{\rm Question}}
\newtheoremstyle{drem}
     {3pt}
     {3pt}
     {\rmfamily}
     {}
     {\itshape}
     {:}
     {.5em}
     {}
\theoremstyle{drem}
\newtheorem{rmk}[teo]{Remark}
\newcommand{\vstr}[0]{{\vrule width 0in height 3ex depth 0in}}
\newcommand{\eg}[0]{\emph{e.g.} }
\newcommand{\ie}[0]{\emph{i.e.} }
\newcommand{\eps}[0]{\epsilon}
\newcommand{\rr}[0]{\ensuremath{\mathbb{R}}}
\newcommand{\zz}[0]{\ensuremath{\mathbb{Z}}}
\newcommand{\supp}[1]{\textrm{\raisebox{-.01ex}{\mbox{$\underset{#1}{\sup}$}}} \:}
\DeclareSymbolFont{bbold}{U}{bbold}{m}{n}
\DeclareSymbolFontAlphabet{\mathbbold}{bbold}
\newcommand{\un}[0]{\mathbbold{1}}
\title{Amenability criterions and critical probabilities in percolation}
\author{Antoine Gournay}
\date{\today}
\begin{document}
\maketitle

A conjecture of Benjamini and Schramm \cite[Conjecture 6]{BS} asks whether the critical percolation and unique percolation are distinct in all Cayley graphs of non-amenable groups (the definitions are given below). The aim of this note is to give a quick proof of:
\begin{teo}\label{leteo}
A countable group is non-amenable if and only if, for some finite multiset $S$ and some parameter $p$ (for Bernoulli percolation), there are infinitely many infinite percolation clusters on the Cayley graph (defined by $S$).
\end{teo}
There are already known improvements to this result: A.~Thom \cite[Corollary 8]{Thom} showed it is actually true for $S$ a generating set. Theorem \ref{leteo} was first proved by Nagnibeda \& Pak \cite[Theorem 1]{PN}. The multisets they take are powers of a given generating set, see \eg \cite[Theorem 2]{PN}. 

Like \cite{PN}, the idea is to show that the conductance (or isoperimetric) constant for some multiset is larger than $\sqrt{1/2}$. This implies $p_c<p_u$ using the work of Benjamini \& Schramm \cite{BS}. The difference in the proof given here is a criterion of amenability due to F{\o}lner, numbered Theorem \ref{tfofol} below, which seems to have been largely forgotten. The real aim of this note is to unearth it. Given this criterion (which replaces Lemma 2 of \cite{PN}), the proof is just ``connecting the dots'' between known results. 

The proof for F{\o}lner's criterion (Theorem \ref{tfolact}) is given in \S{}\ref{sfol} for completeness. 
The work of A.~Thom gives an improvement of this theorem, see \S{}\ref{sthom} below for details. 
F{\o}lner's ``forgotten'' criterion can be in a weaker form extended for actions, see \S{}\ref{serrat} for details.

{\it Acknowledgments:} I would like to thank L. Bartholdi for pointing out a mistake in Theorem 4.1 of the previous version of this paper. The current Theorem \ref{tfolact} gives the result for groups (but the proof does not work for actions). There is still a statement for actions which is more general than the classical one; see Theorem \ref{twfolseq} in \S{}\ref{serrat}. A statement analogous to Theorem \ref{tfolact} for actions remains open, see Question \ref{qcorr}.

\section{Definitions}\label{sdef}

\setcounter{teo}{0}
\renewcommand{\theteo}{\thesection.\arabic{teo}}
\renewcommand{\theques}{\thesection.\arabic{teo}}
\renewcommand{\thecor}{\thesection.\arabic{teo}}

By a finite multiset $S$ in (some set) $Y$, the reader should understand a finite sequence $\{y_i\}_{i=1}^n$ in $Y$. The abuse of notation $|S| = n$ should not create confusion. 

Given a graph $G = (X,E)$, the Bernoulli bond percolation at parameter $p$ is the random subgraph $G[p]$ given by the fact that the variables $Z_e$ (taking value $1$ if $e \in G[p]$ and $0$ else) are i.i.d. Bernoulli with parameter $p$. A cluster is a connected component of $G[p]$. Fix some root vertex $x_0 \in X$. Let $\theta(p) = \mathbb{P}(x$ belongs to an infinite cluster in $G[p])$ and $\zeta(p) = \mathbb{P}($there exists exactly one infinite cluster in $G[p])$. 
\begin{defi}
Critical values for percolation are
\[
p_c(G) = \sup \{p \in [0,1] \mid \theta(p) = 0 \} \text{ and }  p_u(G) = \inf \{p \in [0,1] \mid \zeta(p) =1 \}.
\]
\end{defi}
It is known the number of connected components (\emph{a.k.a.} clusters) is either $0$,$1$ or $\infty$. This number is clearly $0$ for $p<p_c(G)$ and $1$ for $p>p_u(G)$. Hence, there are infinitely many such clusters for some $p$ if and only if $p_c(G) < p_u(G)$. For Cayley graphs, $G$ is replaced by $(\Gamma,S)$.

Let $\un$ be the function taking value $1$ everywhere. 
If $\Gamma$ acts on $X$ (on the left), the ``natural'' action of $\Gamma$ on functions on $X$ is the left-regular action, \ie $\gamma \cdot f(x) = (\lambda_\gamma f)(x) = f(\gamma^{-1}x)$.
\begin{defi}
The action of a countable group $\Gamma$ on $X$ is said to be amenable if there exists a linear functional $m: \ell^\infty(X) \to \rr$ such that $m(\un) =1$,  $m(f) \geq 0$ if $f \geq 0$, and $m(\gamma \cdot f) = m(f)$ for any $\gamma \in \Gamma$.
\end{defi}
A group is amenable if at acts amenably on itself, \ie $X = \Gamma$. Recall the conductance constant is
\[
h(X,S) = \frac{1}{|S|} \inf_{F \subset X} \frac{\sum_{s \in S} |sF \setminus F| }{|F|}
\]
The action of $\Gamma$ on functions extends to a convolution. Given $\phi \in \ell^1(\Gamma)$ and $f \in \ell^p(X)$, $\phi * f (x) = \sum_{\gamma} \phi(\gamma) f(\gamma^{-1}x)$. Let $\un_S$ be the characteristic function of $S \subset \Gamma$. The spectral radius for $S$ is the operator norm of convolution by the uniform probability distribution on $S$:
\[
\rho(X,S) = \sup_{\|f\|_{\ell^2(X)}=1} \big\| \tfrac{1}{|S|} \un_S * f \big\|_{\ell^2(X)} = \frac{1}{|S|} \Big\| \sum_{s \in S} \lambda_s \Big\|_{\ell^2 \to \ell^2}
\]

\section{The dots}\label{sdots}

\setcounter{teo}{0}

A first important result is that Cayley graphs satisfy $p_c \leq p_u$ (\eg see \cite[Theorem 3]{BS}). Also amenability of the group implies $p_c = p_u$ (by the argument of Burton \& Keane, see the remark before \cite[Conjecture 6]{BS}). Both statements generalise to almost transitive graphs. To prove Theorem \ref{leteo}, it suffices to show that some multiset $S$ has $p_c < p_u$ when the group is non-amenable.

The first ingredient relates conductance (or isoperimetry) and spectral radius.
\begin{teo}\emph{(Mohar 1988, \cite[Theorem 2.1 and Theorem 3.1]{Mohar})} \label{tmoh}
\[
\rho(\Gamma,S) \leq \sqrt{1-h(\Gamma,S)^2} \quad \text{ and } \quad h(\Gamma,S) \geq (1-\rho(\Gamma,S)) \frac{|S|}{|S|-1}
\]
\end{teo}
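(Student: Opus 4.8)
The plan is to prove both inequalities by the discrete Cheeger argument for the $|S|$-regular graph $(\Gamma,S)$, whose edge set I denote $E$ (an $s$-edge joins $x$ to $sx$, with multiplicity). Since Mohar's statement concerns undirected graphs I take $S=S^{-1}$, the case relevant to Theorem~\ref{leteo}, so that $M:=\frac1{|S|}\sum_{s\in S}\lambda_s$ is self-adjoint on $\ell^2(X)$, preserves positivity, and has $\rho:=\rho(\Gamma,S)=\|M\|\le 1$. Since $M$ has nonnegative matrix coefficients, $|\langle Mf,f\rangle|\le\langle M|f|,|f|\rangle$, hence $\rho=\sup\{\langle Mf,f\rangle: f\ge 0\text{ finitely supported},\ \|f\|_2=1\}$. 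Fixing such an $f$ and setting $\mathcal E(f):=1-\langle Mf,f\rangle$, the bound $\rho\le\sqrt{1-h^2}$ follows (after squaring; the case $\langle Mf,f\rangle<0$ is trivial, and $h\le 1$) once we show
\[
\mathcal E(f)\,\bigl(2-\mathcal E(f)\bigr)\ \ge\ h^{2}.
\]

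For this, a direct computation gives the two \emph{exact} edge-energy identities
\[
\mathcal E(f)=\frac1{|S|}\sum_{\{x,y\}\in E}\bigl(f(x)-f(y)\bigr)^{2},\qquad 2-\mathcal E(f)=\frac1{|S|}\sum_{\{x,y\}\in E}\bigl(f(x)+f(y)\bigr)^{2}.
\]
Cauchy--Schwarz over $E$, using $|f(x)-f(y)|\,(f(x)+f(y))=|f(x)^2-f(y)^2|$ (valid as $f\ge 0$), bounds the product below by $|S|^{-2}\Bigl(\sum_{\{x,y\}\in E}|f(x)^2-f(y)^2|\Bigr)^2$. Finally, the co-area (layer-cake) formula applied to $g:=f^2$ — writing $g(x)=\int_0^\infty\un_{\{g>t\}}(x)\,dt$, so that $|g(x)-g(y)|=\int_0^\infty\un[\{x,y\}\in\partial\{g>t\}]\,dt$ for each edge — gives, via Fubini,
\[
\sum_{\{x,y\}\in E}\bigl|f(x)^2-f(y)^2\bigr|=\int_0^\infty\bigl|\partial\{g>t\}\bigr|\,dt\ \ge\ h|S|\int_0^\infty\bigl|\{g>t\}\bigr|\,dt= h|S|\sum_x g(x)=h|S|,
\]
where $\partial F$ is the set of edges with exactly one endpoint in $F$, the middle inequality is the definition of $h(\Gamma,S)$ once one notes $\sum_{s\in S}|sF\setminus F|=|\partial F|$, and every level set $\{g>t\}$ is finite (it lies in the support of $f$). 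Combining, $\mathcal E(f)(2-\mathcal E(f))\ge h^2$, and taking the supremum over $f$ yields $\rho\le\sqrt{1-h^2}$.

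The second inequality is the (easier) converse direction, obtained from a single test function. Given $\epsilon>0$, pick a finite $F$ with $\sum_{s\in S}|sF\setminus F|\le(|S|h+\epsilon)|F|$; from $\langle\lambda_s\un_F,\un_F\rangle=|F\cap sF|=|F|-|sF\setminus F|$ one gets $\langle(I-M)\un_F,\un_F\rangle=\frac1{|S|}\sum_{s\in S}|sF\setminus F|\le(h+\epsilon/|S|)|F|$, while $\langle(I-M)f,f\rangle\ge(1-\rho)\|f\|^2$ for every $f$ (Cauchy--Schwarz). Taking $f=\un_F$ and letting $\epsilon\to 0$ already gives $h\ge 1-\rho$; the sharper constant $\frac{|S|}{|S|-1}$ comes from refining this (a more careful accounting of the $|S|$ translates $sF$, or a small correction to $\un_F$, cf.\ \cite{Mohar}), but since $\frac{|S|}{|S|-1}\to 1$ and only the first inequality is used in the proof of Theorem~\ref{leteo}, I would not dwell on it.

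The one genuinely delicate point is getting the \emph{sharp} exponent in the first inequality: the crude bound $\sum_{\{x,y\}\in E}(f(x)+f(y))^2\le 2|S|\,\|f\|^2$ would only give the weaker $\rho\le 1-h^2/2$, and the improvement to $\sqrt{1-h^2}$ rests on identifying $2-\mathcal E(f)$ \emph{exactly} as that edge-energy and coupling it with $\mathcal E(f)$ through Cauchy--Schwarz. The remaining ingredients — the two identities for $\mathcal E(f)$, the equality $\sum_{s\in S}|sF\setminus F|=|\partial F|$, and the co-area formula — are routine bookkeeping.
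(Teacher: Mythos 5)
The paper does not prove this statement---it is imported verbatim from Mohar's article---so there is no in-paper argument to compare against; I judge your proposal on its own. Your proof of the first inequality is correct and complete: the two exact identities $\mathcal{E}(f)=\frac1{|S|}\sum_E(f(x)-f(y))^2$ and $2-\mathcal{E}(f)=\frac1{|S|}\sum_E(f(x)+f(y))^2$, the Cauchy--Schwarz coupling giving $1-\langle Mf,f\rangle^2\ge |S|^{-2}\bigl(\sum_E|f(x)^2-f(y)^2|\bigr)^2$, and the co-area step $\sum_E|g(x)-g(y)|=\int_0^\infty|\partial\{g>t\}|\,\dd t\ge h|S|\,\|f\|_2^2$ are all correctly assembled, and the reduction to nonnegative finitely supported $f$ via $|\langle Mf,f\rangle|\le\langle M|f|,|f|\rangle$ is sound. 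This is in substance Mohar's own (Cheeger-type) argument. The standing assumption $S=S^{-1}$ is acceptable here: the Cayley graphs in Theorems \ref{tbs1} and \ref{tbs2} are undirected anyway, and symmetrizing a multiset does not change $h$ (since $|sF\setminus F|=|s^{-1}F\setminus F|$), so nothing is lost for Theorem \ref{leteo}.

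The one genuine shortfall is the second inequality: you establish only $h\ge 1-\rho$, not the stated $h\ge(1-\rho)\frac{|S|}{|S|-1}$, and deferring the sharper constant to ``a more careful accounting'' is not a proof. Your justification---that only the first inequality enters the proof of Theorem \ref{leteo}---is accurate for \S{}\ref{sproof}, but note that the second inequality \emph{is} invoked in \S{}\ref{sthom}, where Thom's theorem is used to get $h(\Gamma,S)\ge(1-\eps)\frac{|S|}{|S|-1}\ge 1-\eps$. Fortunately that application only uses the final bound $h\ge 1-\eps$, which your weaker estimate already delivers, so no argument in the paper actually breaks; but as a proof of the theorem as stated, the second half is incomplete.
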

Two important ingredients are from \cite{BS}:
\begin{teo}\label{tbs1} \emph{(Benjamini \& Schramm 1996, \cite[Theorem 2]{BS})} 
\[
p_c(\Gamma, S) \leq \frac{1}{|S| h(\Gamma,S)+1}. 
\]
\end{teo}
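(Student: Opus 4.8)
Write $\kappa := |S|\,h(\Gamma,S)$, so that by definition $\kappa = \inf_{F}\frac{\sum_{s\in S}|sF\setminus F|}{|F|}$ and hence at least $\kappa|F|$ edges of $(\Gamma,S)$ (counted with the multiplicity coming from $S$) join any finite $F\subset\Gamma$ to its complement. The plan is to show $\theta(p)>0$ whenever $p>\frac{1}{\kappa+1}$; since $p_c(\Gamma,S)=\sup\{p:\theta(p)=0\}$, the set on the right is then contained in $[0,\frac{1}{\kappa+1}]$, which is the claim. (If $\Gamma$ is amenable then $\kappa=0$ and the bound is the vacuous $p_c\le1$, so assume $\kappa>0$.)

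So fix $p>\frac{1}{\kappa+1}$, i.e. $p(\kappa+1)>1$, and let $C$ be the cluster of the root $x_0$. I would explore $C$ by a one-edge-at-a-time procedure set up so that the open answers build a spanning tree of $C$: keep a set $A$ of vertices already known to lie in $C$, with $A=\{x_0\}$ initially; while there is an unexamined edge $\{x,y\}$ with $x\in A$ and $y\notin A$, examine one such edge, adjoining $y$ to $A$ if that edge is open. This halts precisely when no edge leaves $A$, at which point $A=C$; it halts if and only if $C$ is finite. Two counts drive the proof. First, each open examination enlarges $A$ by exactly one vertex, so the number of open examinations equals $|C|-1$. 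Second, if $C$ is finite then every one of its (at least $\kappa|C|$) boundary edges is eventually examined, and necessarily found closed, so the number of closed examinations is at least $\kappa|C|$. Writing $T$ for the total number of examinations and $S_T$ for the number of them that were open, we get on $\{C\text{ finite}\}$ that $|C|=S_T+1$ and $T=S_T+(\#\text{closed})\ge S_T+\kappa(S_T+1)=(\kappa+1)S_T+\kappa$.

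Now invoke the standard fact that revealing i.i.d.\ Bernoulli($p$) labels in an order chosen adaptively from what has been seen leaves the revealed sequence i.i.d.\ Bernoulli($p$): extend the procedure arbitrarily after it halts and let $\xi_1,\xi_2,\dots$ be the answers in order of examination. Then $N_t:=t-(\kappa+1)\sum_{i\le t}\xi_i$ is a random walk with i.i.d.\ increments of mean $1-p(\kappa+1)<0$ and bounded jumps ($1$ or $-\kappa$). The last display says exactly that on $\{C\text{ finite}\}=\{T<\infty\}$ one has $N_T\ge\kappa>0$, hence $\{C\text{ finite}\}\subseteq\{\sup_{t\ge0}N_t\ge\kappa\}$. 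Choosing $\eta>0$ small enough that $\mathbb{E}[e^{\eta(1-(\kappa+1)\xi_1)}]\le1$ (possible, since the left side equals $1$ at $\eta=0$ with derivative the negative mean) makes $e^{\eta N_t}$ a nonnegative supermartingale of initial value $1$, so Doob's maximal inequality gives $\mathbb{P}(\sup_t N_t\ge\kappa)\le e^{-\eta\kappa}<1$. Therefore $\theta(p)=\mathbb{P}(C\text{ infinite})\ge1-e^{-\eta\kappa}>0$, as wanted.

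Most of this is routine: the invariance of the i.i.d.\ law under adapted revelation, and the exponential maximal bound for a negative-drift walk with bounded increments, are textbook. The point I expect to be the real obstacle, and would spell out carefully, is the pair of bookkeeping identities — in particular that the exploration must examine an edge only while exactly one of its endpoints lies in $A$. That single convention is what simultaneously forces the open examinations to number exactly $|C|-1$ and guarantees that every boundary edge of a finite cluster gets examined, so that the amortized isoperimetric estimate $T\ge(\kappa+1)|C|$ holds and can be read as a random walk whose drift $1-p(\kappa+1)$ is negative precisely in the regime $p>\frac{1}{\kappa+1}$. In spirit this is a domination of the cluster exploration by a supercritical branching process, with isoperimetry supplying the mean-offspring-above-one; the random walk is merely a clean way to run the argument without tracking vertex multiplicities.
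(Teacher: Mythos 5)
The paper quotes this inequality from Benjamini and Schramm as a black box and never reproduces their proof, so there is no internal argument to compare yours against; what you have written is a correct, self-contained proof of the cited result, and it is essentially the standard exploration argument (close in spirit to the original). The two bookkeeping identities you single out are exactly the crux and you state them correctly: on the event that the cluster $C$ of the root is finite, the number of open examinations is $|C|-1$ and the number of closed examinations is at least $\kappa|C|$ with $\kappa=|S|h(\Gamma,S)$, which forces the negative-drift walk $N_t=t-(\kappa+1)\sum_{i\le t}\xi_i$ to reach level $\kappa$; the exponential supermartingale then bounds that event away from probability $1$. The one convention you should make explicit is the multigraph one: since $|S|$ is a multiset cardinality and both $h(\Gamma,S)$ and the statement count with multiplicity, the percolation must be taken on the Cayley multigraph with one independent edge $\{x,sx\}$ for each element $s$ of the multiset $S$. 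With that convention, $\sum_{s\in S}|sF\setminus F|$ enumerates pairwise distinct multi-edges having exactly one endpoint in $F$ (it only sees the ``$s$-direction'' of each boundary edge, but that only makes it a lower bound, which is all you need), so it really does bound from below the number of boundary edges your exploration must examine and find closed. The remaining ingredients --- that adaptively revealed i.i.d.\ Bernoulli labels are still i.i.d., and Doob's maximal inequality for $e^{\eta N_t}$ with $\eta$ small --- are routine, as you say.
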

\begin{teo}\label{tbs2} \emph{(Benjamini \& Schramm 1996, \cite[Theorem 4]{BS})}
\[
\text{If } \rho(\Gamma,S) p_c(\Gamma, S)|S| < 1 \quad \text{ then } \quad p_c(\Gamma, S) < p_u(\Gamma, S).
\]
\end{teo}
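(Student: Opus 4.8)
The plan is to turn the spectral condition into a statement about the \emph{number} of infinite clusters: I will show that at some parameter strictly above $p_c$ there cannot be a unique infinite cluster, so $p_u$ must exceed $p_c$. Since the hypothesis reads $p_c(\Gamma,S) < 1/(\rho(\Gamma,S)\,|S|)$, I would first fix a parameter $p$ with $p_c(\Gamma,S) < p < 1/(\rho(\Gamma,S)\,|S|)$; such a $p$ exists precisely because both bounds are strict. Write $A = \sum_{s\in S}\lambda_s$ for the adjacency operator of the Cayley graph, so that $\|A\|_{\ell^2\to\ell^2} = \rho(\Gamma,S)\,|S|$ and hence $p\|A\| < 1$.

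The first key step is an $\ell^2$ bound on the two-point function $\tau_p(x,y) = \mathbb{P}(x \leftrightarrow y \text{ in } G[p])$. A union bound over open self-avoiding paths from $x$ to $y$, each of which is in particular an open walk, gives the entrywise domination $\tau_p(x,y) \le \sum_{n\ge 0} p^n \langle \delta_y, A^n \delta_x\rangle$, \ie $0 \le \tau_p \le (I - pA)^{-1}$ as kernels. Because both kernels are nonnegative, the $\ell^2\to\ell^2$ operator norm is monotone under entrywise domination, so $\|\tau_p\|_{\ell^2\to\ell^2} \le (1 - p\|A\|)^{-1} < \infty$. In particular, taking the fixed root $x_0$, one gets $\sum_y \tau_p(x_0,y)^2 \le \|\tau_p\|^2 < \infty$.

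The second key step is a contradiction with uniqueness. Suppose that at this $p$ there were almost surely a unique infinite cluster. Both $\{x_0 \leftrightarrow \infty\}$ and $\{y \leftrightarrow \infty\}$ are increasing events, so by the Harris--FKG inequality $\mathbb{P}(x_0\leftrightarrow\infty,\ y\leftrightarrow\infty) \ge \theta(p)^2$; on this event, uniqueness forces $x_0$ and $y$ into the same cluster, whence $\tau_p(x_0,y) \ge \theta(p)^2$ for every $y$. Since $p > p_c(\Gamma,S)$ we have $\theta(p) > 0$, and summing over the infinitely many vertices $y$ gives $\sum_y \tau_p(x_0,y)^2 \ge \sum_y \theta(p)^4 = \infty$, contradicting the $\ell^2$ bound above. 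Hence $\zeta(p) \neq 1$.

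Finally I would invoke monotonicity of uniqueness (the number of infinite clusters never passes from one back to many as $p$ increases, so $\{p : \zeta(p) = 1\}$ is an up-set): from $\zeta(p)\neq 1$ it follows that $p \le p_u(\Gamma,S)$, and combined with $p > p_c(\Gamma,S)$ this yields $p_c(\Gamma,S) < p_u(\Gamma,S)$. The main obstacle is welding the two halves together: on the functional-analytic side, justifying the operator domination cleanly (the passage from connectivity to the adjacency operator and the monotonicity of the $\ell^2$ norm under nonnegative domination), and on the probabilistic side, the FKG lower bound together with the almost-sure monotonicity of uniqueness, which are the genuinely nontrivial inputs.
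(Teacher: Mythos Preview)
The paper does not prove this statement; it is quoted from Benjamini and Schramm \cite[Theorem~4]{BS} and used as a black box in \S\ref{sdots} and \S\ref{sproof}. Your argument is essentially the original Benjamini--Schramm proof and is correct.

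One remark: your final appeal to monotonicity of uniqueness is heavier than needed (and anachronistic, since the H\"aggstr\"om--Peres and Schonmann results postdate \cite{BS}). The $\ell^2$ bound $\sum_y \tau_p(x_0,y)^2 < \infty$ and the FKG contradiction hold uniformly for \emph{every} $p$ in the open interval $\big(p_c,\ (\rho(\Gamma,S)\,|S|)^{-1}\big)$, so $\zeta(p)\neq 1$ throughout that interval; since also $\zeta(p)=0$ for $p<p_c$, one obtains $p_u \ge (\rho(\Gamma,S)\,|S|)^{-1} > p_c$ directly, without ever invoking that $\{p:\zeta(p)=1\}$ is an up-set.
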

The criterion of F{\o}lner which is always cited is \cite[Main Theorem (a)]{Fol55}, see Theorem \ref{tfolseq} below. Right afterwards, F{\o}lner mentions another criterion \cite[Main theorem (b)]{Fol55}:
\begin{teo}[F{\o}lner's other criterion] \label{tfofol}
A group is amenable if and only if there exists $k \in ]0,1[$ such that for any finite multiset $S \subset \Gamma$ 
\[\tag{$\textsf{I}_{k,S}$}
\exists \text{ a finite } F \subset \Gamma \text{ such that } \frac{1}{|S|} \sum_{s \in S} |sF \cap F| > k |F|.
\]
\end{teo}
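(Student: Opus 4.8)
The plan is to establish the two implications separately. The direct implication is immediate: if $\Gamma$ is amenable then, by the classical F{\o}lner criterion (Theorem~\ref{tfolseq}), $h(\Gamma,S)=0$ for every finite multiset $S$, so there is a finite nonempty $F$ with $\frac{1}{|S|}\sum_{s\in S}|sF\setminus F|<\tfrac12|F|$; since $|sF\cap F|=|F|-|sF\setminus F|$, this gives $\frac{1}{|S|}\sum_{s\in S}|sF\cap F|>\tfrac12|F|$, so $(\textsf{I}_{k,S})$ holds with $k=\tfrac12$, uniformly in $S$.

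For the converse, assume $k\in\,]0,1[$ is such that $(\textsf{I}_{k,S})$ holds for every finite multiset $S$. I will show that $h(\Gamma,T)=0$ for every finite \emph{symmetric} multiset $T$ (meaning $T=T^{-1}$ as multisets). This suffices: for an arbitrary finite multiset $S$ the multiset $T=S\sqcup S^{-1}$ is symmetric and each term $|sF\setminus F|$ with $s\in S$ occurs among the terms for $T$, so $h(\Gamma,S)\le\frac{|T|}{|S|}\,h(\Gamma,T)=2\,h(\Gamma,T)$; the vanishing of the conductance constant on all finite multisets is exactly the F{\o}lner criterion, so $\Gamma$ is amenable by Theorem~\ref{tfolseq}.

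So fix a finite symmetric $T$ and write $M_T=\frac{1}{|T|}\sum_{t\in T}\lambda_t$ acting on $\ell^2(\Gamma)$. Two remarks: first, $\frac{1}{|T|}\sum_{t\in T}|tF\cap F|=\langle M_T\un_F,\un_F\rangle$, so $(\textsf{I}_{k,S})$ is the assertion that $\langle M_S\un_F,\un_F\rangle>k\|\un_F\|^2$ for a suitable finite nonempty $F$; second, since $T=T^{-1}$ and $\lambda_t^{*}=\lambda_{t^{-1}}$, the operator $M_T$ is self-adjoint, and $\|M_T\|=\rho(\Gamma,T)\le1$. Now amplify: for $n\ge1$ let $T^{(n)}$ be the finite multiset of the $|T|^n$ words $t_{i_1}\cdots t_{i_n}$; since $\lambda_{gh}=\lambda_g\lambda_h$ one has $M_{T^{(n)}}=M_T^{\,n}$. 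Applying the hypothesis to the multiset $T^{(2m)}$ yields a finite nonempty $F_m$ with
\[
k\,\|\un_{F_m}\|^2 \;<\; \langle M_T^{\,2m}\un_{F_m},\un_{F_m}\rangle \;=\; \|M_T^{\,m}\un_{F_m}\|^2 \;\le\; \|M_T\|^{2m}\|\un_{F_m}\|^2 \;=\; \rho(\Gamma,T)^{2m}\|\un_{F_m}\|^2 ,
\]
using that $M_T$ is self-adjoint. Hence $\rho(\Gamma,T)^{2m}>k$ for all $m\ge1$; letting $m\to\infty$ (this is exactly where the \emph{uniformity} of $k$ in $(\textsf{I}_{k,S})$ is used) forces $\rho(\Gamma,T)=1$. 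Mohar's inequality $\rho(\Gamma,T)\le\sqrt{1-h(\Gamma,T)^2}$ (Theorem~\ref{tmoh}) then gives $h(\Gamma,T)=0$, as wanted. (Alternatively, one may quote Kesten's theorem at the point where $\rho(\Gamma,T)=1$, for $T$ a symmetric generating multiset.)

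The one delicate point is the amplification step: one must verify the identity $M_{T^{(n)}}=M_T^{\,n}$ with its bookkeeping of multiset multiplicities and — crucially — $T$ must be taken symmetric so that $M_T$ is self-adjoint and $\langle M_T^{2m}u,u\rangle=\|M_T^{\,m}u\|^2$ is controlled by the spectral radius; for non-symmetric $T$ one has $\|M_T\|=1$ identically and the argument would be vacuous. Everything else — the reduction to symmetric multisets, the bound $\|M_T\|\le1$, and the passage from $h\equiv0$ to amenability — is routine.
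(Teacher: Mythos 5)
Your proof is correct, but it takes a genuinely different route from the paper's. For the hard direction you argue by spectral amplification: reduce to symmetric multisets $T$ (via $h(\Gamma,S)\le 2\,h(\Gamma,S\sqcup S^{-1})$), note that $\tfrac{1}{|T|}\sum_{t\in T}|tF\cap F|=\langle M_T\un_F,\un_F\rangle$ with $M_T=\tfrac{1}{|T|}\sum_t\lambda_t$ self-adjoint of norm $\rho(\Gamma,T)\le 1$, feed the multiset $T^{(2m)}$ of length-$2m$ words into the hypothesis to get $k<\rho(\Gamma,T)^{2m}$ for all $m$, conclude $\rho(\Gamma,T)=1$, convert this to $h(\Gamma,T)=0$ by Mohar's inequality (Theorem \ref{tmoh}), and finish with Theorem \ref{tfolseq}. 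All steps check: the identity $M_{T^{(n)}}=M_T^{\,n}$ is precisely where multisets are indispensable, and $h\equiv 0$ on finite sets does yield F{\o}lner sets (apply it with $\eps/|S|$ so that each individual term $|sF\setminus F|$ is small). This is essentially the Pak--Nagnibeda/Kesten strategy --- non-amenability produces a power of a symmetric multiset with small spectral radius --- which is exactly the machinery the note set out to bypass. The paper instead follows F{\o}lner's original argument: negate the Dixmier condition (Theorem \ref{tfolem}) to obtain $H=\sum_s h_s-\delta_s*h_s$ with $\sup H\le-\eps$, average it against the set $F$ supplied by ($\textsf{I}_{k,S}$), and iterate to force $\eps\le(1-k)^m$ for every $m$. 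Your version is shorter given the cited black boxes and makes transparent where the uniformity of $k$ is used; the paper's is more elementary (no spectral theory, no Mohar) and is the historical argument being resurrected. One aside of yours is false: it is not true that $\|M_T\|=1$ for every non-symmetric $T$ (for three free generators $\|M_T\|=2\sqrt{2}/3<1$ by Kesten/Akemann--Ostrand). The genuine reason symmetry is needed is at the final step, not there: for non-symmetric $T$ (e.g.\ $T=\{a\}$) one has $\rho(\Gamma,T)=1$ with no amenability consequence, and Mohar's inequality concerns the undirected Cayley multigraph. Since your argument only ever invokes symmetric $T$, this mislabelled motivation does not affect its validity.
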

Anachronistically, this result is an improvement of Juschenko \& Nagnibeda \cite[Theorem 18]{JN} (in the sense that the condition is weaker). 
A improvement is possible (thanks to the results of Thom \cite[Theorem 1]{Thom}), see \S{}\ref{sthom} for details.

\section{Proof of theorem \ref{leteo}}\label{sproof}

\setcounter{teo}{0}

Putting Theorems \ref{tmoh}, \ref{tbs1} and \ref{tbs2} together shows that 
\[
\text{if } \quad \frac{\sqrt{1-h(\Gamma,S)^2}}{h(\Gamma,S)+|S|^{-1}} < 1 \quad \text{ then } \quad p_c(\Gamma, S) < p_u(\Gamma, S).
\]
So that $h(\Gamma,S) > \sqrt{\tfrac{1}{2}}$ implies $p_c(\Gamma, S) < p_u(\Gamma, S)$.

Theorem \ref{leteo} is thus implied by the following easy corollary of Theorem \ref{tfofol}.
\begin{cor}\label{thiks-c}
A group $\Gamma$ is non-amenable if and only if $\forall \eps \in ]0,1[$ there is a finite multiset $S$ such that $h(\Gamma,S) > 1-\eps$. 
\end{cor}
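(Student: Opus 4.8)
The plan is to notice that the conductance constant is literally one minus the quantity featured in F{\o}lner's other criterion, and then read off the corollary by negating Theorem \ref{tfofol}. The starting point is a bookkeeping identity: for a finite nonempty $F \subset \Gamma$ and $s \in \Gamma$, left translation $x \mapsto sx$ is a bijection of $\Gamma$, so $|sF| = |F|$ and hence $|sF \setminus F| = |F| - |sF \cap F|$. Summing over $s \in S$ (with multiplicity) and dividing by $|S|\,|F|$ gives
\[
\frac{1}{|S|}\frac{\sum_{s \in S}|sF \setminus F|}{|F|} = 1 - \frac{1}{|S|}\frac{\sum_{s \in S}|sF \cap F|}{|F|},
\]
and taking the infimum over all finite nonempty $F$,
\[
h(\Gamma,S) = 1 - \sup_{F} \frac{1}{|S|\,|F|}\sum_{s \in S}|sF \cap F|.
\]

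Next I would translate the condition $(\textsf{I}_{k,S})$ through this identity. For fixed $k \in\, ]0,1[$, the supremum on the right-hand side is strictly larger than $k$ exactly when some finite $F$ satisfies $\frac{1}{|S|}\sum_{s\in S}|sF\cap F| > k|F|$, that is, exactly when $(\textsf{I}_{k,S})$ holds; thus $(\textsf{I}_{k,S})$ is equivalent to $h(\Gamma,S) < 1-k$. Substituting into Theorem \ref{tfofol}, $\Gamma$ is amenable if and only if there exists $k \in\, ]0,1[$ such that $h(\Gamma,S) < 1-k$ for every finite multiset $S$; negating, $\Gamma$ is non-amenable if and only if for every $k \in\, ]0,1[$ there is a finite multiset $S$ with $h(\Gamma,S) \geq 1-k$.

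It remains to reconcile this with the statement of the corollary. If $\Gamma$ is non-amenable and $\eps \in\, ]0,1[$ is given, apply the criterion just obtained with $k = \eps/2 \in\, ]0,1[$ to produce a finite multiset $S$ with $h(\Gamma,S) \geq 1 - \eps/2 > 1-\eps$. Conversely, if for every $\eps \in\, ]0,1[$ there is a finite multiset $S$ with $h(\Gamma,S) > 1-\eps$, then specializing to $\eps = k$ for each $k \in\, ]0,1[$ gives exactly the non-amenability criterion above, so $\Gamma$ is non-amenable. I do not expect a genuine obstacle: the only points requiring care are the passage between strict and non-strict inequalities (this is why halving $\eps$ is harmless) and the elementary fact that a supremum exceeds $k$ precisely when one of its terms does, which is exactly what the existential quantifier in $(\textsf{I}_{k,S})$ encodes.
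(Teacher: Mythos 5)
Your proposal is correct and follows essentially the same route as the paper: rewrite $|sF\setminus F|=|F|-|sF\cap F|$ to identify $(\textsf{I}_{k,S})$ with the condition $h(\Gamma,S)<1-k$, then negate F{\o}lner's criterion (Theorem \ref{tfofol}). Your extra step of halving $\eps$ to pass from the non-strict bound $h(\Gamma,S)\geq 1-k$ to the strict inequality in the corollary's statement is a small point of care that the paper glosses over, but the argument is otherwise identical.
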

\begin{proof}
Indeed, by Theorem \ref{tfofol}, if $\Gamma$ is non-amenable, then for all $k \in ]0,1[$, there exists a finite multiset $S$ such that 
\[\tag{$\not \textsf{I}_{k,S}$}  \label{niks}
\forall \text{ finite } F \subset \Gamma, \frac{1}{|S|} \sum_{s \in S} |sF \cap F| \leq k |F|.
\]
This is equivalent to 
\[
\forall \text{ finite } F \subset \Gamma, \frac{1}{|S|} \frac{\sum_{s \in S} |sF \setminus F| }{|F|} \geq 1- k.
\]
In turn, this is equivalent to $h(\Gamma,S) \geq 1-k$.
\end{proof}
To prove Theorem \ref{leteo}, just pick $k < 1 - \sqrt{1/2}$.

\section{Proof of F{\o}lner's criterion}\label{sfol}

\setcounter{teo}{0}

Here is F{\o}lner's original theorem.
\begin{teo}\emph{(F{\o}lner 1955, \cite[Main Theorem (b)]{Fol55})}\label{tfolact}
A group $\Gamma$ is a amenable if and only if there exists $k_0 \in ]0,1[$ such that for any finite multiset $S \subset \Gamma$ there is a finite set $F \subset \Gamma$ such that
\[
\frac{1}{|S|} \sum_{s \in S} |sF \cap F| > k_0 |F|.
\]
\end{teo}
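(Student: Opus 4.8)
The plan is to recast the averaged F{\o}lner condition in terms of the convolution operators on $\ell^2(X)$ introduced in \S\ref{sdef}, and to push both implications through Reiter-type almost-invariant vectors. For a finite symmetric multiset $T\subset\Gamma$ containing $e$, let $M_T=\tfrac1{|T|}\sum_{t\in T}\lambda_t$ be the associated self-adjoint convolution operator on the real Hilbert space $\ell^2(X)$; since the $\lambda_t$ are unitary we have $\|M_T\|\le1$, and (using $\lambda_s\un_F=\un_{sF}$) the displayed condition for $T$ says exactly that $\langle M_T\un_F,\un_F\rangle>k_0\|\un_F\|_2^2$ for some nonempty finite $F\subset X$. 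I will also use the standard reformulation of amenability of the action as Reiter's condition --- for every finite $T\subset\Gamma$ and $\eps>0$ there is $\eta\in\ell^1(X)$ with $\eta\ge0$, $\|\eta\|_1=1$ and $\|\lambda_t\eta-\eta\|_1<\eps$ for all $t\in T$ --- together with the standard passage between such $\eta$'s and an invariant mean (extract a mean as a weak-$*$ cluster point of the corresponding states; conversely approximate a mean in $\ell^1$ by Day's convexity trick).

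\textbf{Hard direction: the condition implies amenability.} The key idea is to feed the hypothesis not a single multiset but the convolution powers of one. Fix a finite symmetric $T\ni e$. For each $M\ge1$ the uniform measure on the finite multiset $S_M$ of all length-$2M$ words in $T$ has convolution operator $M_T^{2M}$, so applying the hypothesis to $S_M$ produces a nonempty finite $F$ with $\|M_T^M\un_F\|_2^2=\langle M_T^{2M}\un_F,\un_F\rangle>k_0\|\un_F\|_2^2$, hence $\|M_T^M\|>\sqrt{k_0}$. As $M_T$ is self-adjoint, $\|M_T\|=\lim_M\|M_T^M\|^{1/M}\ge\lim_M k_0^{1/(2M)}=1$, so $\|M_T\|=1$. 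Since $e\in T$, the spectrum of $M_T$ lies in $]-1,1]$, so $1\in\sigma(M_T)$ and there are unit $\xi_n\in\ell^2(X)$ with $\|M_T\xi_n-\xi_n\|_2\to0$; then $\tfrac1{|T|}\sum_{t\in T}\langle\lambda_t\xi_n,\xi_n\rangle\to1$ with each summand $\le1$, which forces $\|\lambda_t\xi_n-\xi_n\|_2^2=2-2\langle\lambda_t\xi_n,\xi_n\rangle\to0$ for every $t\in T$. Given an arbitrary finite $T_0\subset\Gamma$ and $\eps>0$, I would apply this with $T=T_0\cup T_0^{-1}\cup\{e\}$ to obtain a unit $\xi\in\ell^2(X)$ that is $\eps$-almost-invariant under $T_0$, and then pass to $\eta=\xi^2\in\ell^1(X)$: it satisfies $\eta\ge0$, $\|\eta\|_1=1$ and, by Cauchy--Schwarz, $\|\lambda_t\eta-\eta\|_1\le\|\lambda_t\xi-\xi\|_2\,\|\lambda_t\xi+\xi\|_2\le2\eps$ for $t\in T_0$. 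That is Reiter's condition, so the action is amenable.

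\textbf{Easy direction: amenability implies the condition, with any $k_0\in\,]0,1[$.} Given a finite multiset $S$ with underlying set $T$, use Reiter's condition to pick $\xi\in\ell^1(X)$, $\xi\ge0$, $\|\xi\|_1=1$, with $\|\lambda_s\xi-\xi\|_1<\eps$ for all $s\in T$, where $\eps<1-k_0$. Decompose $\xi=\int_0^\infty\un_{F_u}\,\dd u$ with $F_u=\{\xi>u\}$ (finite for $u>0$). Then $\int_0^\infty|F_u|\,\dd u=\|\xi\|_1=1$ and $\int_0^\infty|F_u\setminus sF_u|\,\dd u\le\|\lambda_s\xi-\xi\|_1<\eps$, whence
\[
\int_0^\infty\Big(\tfrac1{|S|}\sum_{s\in S}|sF_u\cap F_u|-k_0|F_u|\Big)\,\dd u\ \ge\ (1-\eps)-k_0\ >\ 0,
\]
so some level $u>0$ yields a nonempty finite $F=F_u$ with $\tfrac1{|S|}\sum_{s\in S}|sF\cap F|>k_0|F|$.

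\textbf{Where the difficulty is.} The only step that is more than bookkeeping is the convolution-power trick in the hard direction: it is what turns the \emph{fixed} constant $k_0$ --- a hypothesis a priori far weaker than any F{\o}lner condition for a single set --- into the exact equality $\|M_T\|=1$, exploiting only that $k_0^{1/(2M)}\to1$. Everything afterward (extracting almost-invariant $\ell^2$ vectors from $1\in\sigma(M_T)$, the squaring map $\ell^2\to\ell^1$, the layer-cake decomposition, and building the invariant mean) is routine amenability technology, and I expect no trouble there beyond care with multisets and with inverses.
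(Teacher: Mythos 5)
Your proof is correct, but it takes a genuinely different route from the paper's. The paper deduces Theorem \ref{tfolact} from the Dixmier condition (Theorem \ref{tfolem}): assuming a function $H_0=\sum_{s\in S}(h_s-\delta_s*h_s)$ with $\sup H_0\le-\eps$ exists while \eqref{iks} holds for every multiset, it averages $H_0$ over the set $F$ furnished by the hypothesis; the cancellations forced by $\sum_{s}|sF\cap F|>k\,|S|\,|F|$ shrink the $\ell^\infty$ bound by a factor $(1-k)$ while preserving the form of $H$ and the bound $\sup H\le -\eps$, and iterating gives $\eps\le(1-k)^m\to0$, a contradiction. Your convolution-power trick is the operator-theoretic counterpart of the same amplification: feeding the multiset of length-$2M$ words in a symmetric $T\ni e$ into the hypothesis turns the fixed constant $k_0$ into $\|M_T\|>k_0^{1/2M}\to1$, exactly as the paper's iteration turns the fixed $k$ into $\eps\le(1-k)^m\to 0$. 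What your route buys is a direct link to the spectral radius $\rho(\Gamma,S)$ around which the rest of the paper revolves (Mohar's inequalities, Thom's theorem), plus a self-contained easy direction with the sharp constant (any $k_0\in\,]0,1[$) via the layer-cake decomposition instead of invoking Theorem \ref{tfolseq}. What it costs is heavier standard machinery: the spectral theorem for the self-adjoint Markov operator (to pass from $\|M_T\|=1$, together with $\sigma(M_T)$ bounded away from $-1$ because $e\in T$, to almost-invariant $\ell^2$ vectors), the $\ell^2\to\ell^1$ squaring, and the equivalence of amenability of the action with Reiter's condition via Day's convexity argument --- whereas the paper stays entirely in $\ell^\infty$ and needs only Hahn--Banach. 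The steps you flag as routine are indeed routine and correct; in particular the nonemptiness of $F$ (forced by the strict inequality), the identity $\langle M_T^{2M}\un_F,\un_F\rangle=\|M_T^M\un_F\|_2^2$ for symmetric $T$, and the exclusion of $-1$ from the spectrum are exactly the small checks that need to be made.
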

Compared to what has become known as F{\o}lner sequences, this theorem is surprising in two ways. 
First, $k_0$ need not be arbitrarily close to $1$.
Second, it suffices that a fixed proportion of the elements of $S$ that displace $F$ (by some fixed proportion) in order to apply it.

As pointed out by L. Bartholdi, the author incorrectly stated this theorem for actions in the previous version of this paper. Although there is a weaker form of F{\o}lner's criterion for actions, the author now doubts that Theorem \ref{tfolact} can be stated ``as is'' for actions (see section \ref{serrat} below).

The difficult part in the original proof of F{\o}lner is to show the existence of F{\o}lner sequence from the existence of an invariant mean. Given that, the proof of Theorem \ref{tfofol} (or \ref{tfolact}) is not difficult. The reader is strongly advised to read the original papers of F{\o}lner \cite{Fol54} and \cite{Fol55}. It still seems to be a source of inspiration for recent work, \eg see Cannon, Floyd \& Parry's recent article \cite{CFP}.

To show amenability, F{\o}lner uses a preliminary result (sometimes called the Dixmier condition). 
\begin{teo}\emph{(F{\o}lner 1954, \cite[Theorem 4]{Fol54}; see also Dixmier 1950, \cite[Th\'eor\`eme 1]{Dixmier})} \label{tfolem} \\
$\Gamma$ has an amenable action on $X$ if and only if for any $h_1, \ldots, h_n \in \ell^\infty(X)$ and $\gamma_1, \ldots, \gamma_n \in \Gamma$, 
\[ \tag{\textsf{D}} \label{fofol}
H(x) = \sum_{i=1}^n \big( h_i(x) - h_i(\gamma_i^{-1} x ) \big)
\quad \text{ has } \quad 
\supp{x \in X} H(x) \geq 0
\]
\end{teo}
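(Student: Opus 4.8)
The plan is to prove Theorem~\ref{tfolem} (the Dixmier-type condition) as an easy consequence of the Hahn--Banach theorem, and then use it to derive F{\o}lner's other criterion (Theorem~\ref{tfolact}). For the Dixmier condition, the forward direction is immediate: if $m$ is an invariant mean and $H(x) = \sum_i h_i(x) - h_i(\gamma_i^{-1}x)$, then $m(H) = \sum_i m(h_i) - m(\lambda_{\gamma_i} h_i) = 0$ by invariance, and since $m$ is positive and normalized, $m(H)=0$ forces $\sup_x H(x) \geq 0$. For the converse, I would consider the linear subspace $W \subset \ell^\infty(X)$ spanned by all functions of the form $h - \lambda_\gamma h$ (for $h \in \ell^\infty(X)$, $\gamma \in \Gamma$), together with the seminorm (or sublinear functional) $p(f) = \sup_{x} f(x)$. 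Condition~\eqref{fofol} says precisely that $p$ vanishes (is $\leq 0$) on $W$, equivalently the constant functional $0$ on $W$ is dominated by $p$; extend $0$ from $W \oplus \rr\un$ sending $\un \mapsto 1$ (checking this is still dominated by $p$ because $\sup_x (w(x) + c) = c + \sup_x w(x) \geq c$ when $\sup w \geq 0$... one must instead use $\sup_x(w + c\un) \geq c$ and $\geq$, with a small argument) and apply Hahn--Banach to get $m : \ell^\infty(X) \to \rr$ with $m \leq p$. Then $m(\un) = 1$, $m(f) \leq \sup f$ gives positivity (apply to $-f$), and $m$ vanishes on $W$ gives invariance under each $\lambda_\gamma$.

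Next I would deduce Theorem~\ref{tfolact} from Theorem~\ref{tfolem}. The nontrivial implication is: if for every finite multiset $S$ there is a finite $F \subset X$ with $\frac{1}{|S|}\sum_{s \in S}|sF \cap F| > k_0|F|$, then the Dixmier condition holds, hence $\Gamma$ acts amenably. So fix $h_1,\dots,h_n \in \ell^\infty(X)$ and $\gamma_1,\dots,\gamma_n \in \Gamma$ and set $H = \sum_i (h_i - \lambda_{\gamma_i} h_i)$; I must show $\sup_x H \geq 0$. The idea is to average $H$ against the normalized indicator $\frac{1}{|F|}\un_F$ for a well-chosen $F$: one computes $\frac{1}{|F|}\sum_x H(x)\un_F(x) = \frac{1}{|F|}\sum_i \sum_x h_i(x)(\un_F(x) - \un_F(\gamma_i x))$, and the inner sum is bounded in absolute value by $\|h_i\|_\infty \cdot |F \triangle \gamma_i^{-1}F| = 2\|h_i\|_\infty(|F| - |\gamma_i F \cap F|)$. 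Taking $S$ to be the multiset $\{\gamma_1,\dots,\gamma_n\}$ with each $\gamma_i$ repeated enough times (weighted by $\|h_i\|_\infty$, or by a suitable integer approximation thereof) so that the hypothesis produces an $F$ with each $|\gamma_i F \cap F|$ close to $|F|$, this average can be made $> -\eps$ for any $\eps > 0$; since the average over $F$ is a convex combination of values of $H$, some $x \in F$ has $H(x) > -\eps$, and letting $\eps \to 0$ gives $\sup_x H \geq 0$.

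Finally, Theorem~\ref{tfofol} is the special case $X = \Gamma$ with the left multiplication action, together with the trivial observation that a group is amenable iff it acts amenably on itself; the quantifier over "all finite multisets $S$" on the amenable side matches the statement because if the F{\o}lner-type inequality ($\textsf{I}_{k,S}$) holds for one value of $k \in ]0,1[$ uniformly over all $S$, it holds for the single constant $k_0 = k$, and conversely. One subtlety worth stating carefully: the constant $k_0$ must be \emph{uniform} in $S$, and scaling a multiset (repeating elements) does not change the ratio in ($\textsf{I}_{k,S}$), which is what lets the "weighted multiset" trick in the previous paragraph go through with a single $k_0$.

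\medskip

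The main obstacle I expect is the bookkeeping in the Hahn--Banach step for Theorem~\ref{tfolem}: one has to extend from $W$ to $W \oplus \rr\un$ and verify that the functional sending $w + c\un \mapsto c$ is still dominated by $f \mapsto \sup_x f(x)$, which requires knowing $\sup_x(w(x) + c) \geq c$, i.e. $\sup_x w(x) \geq 0$ — and this holds because any $w \in W$ is of the form $H$, so condition~\eqref{fofol} applies. Getting this circularity-free and making sure the sublinear functional is genuinely sublinear (positive homogeneity and subadditivity of $\sup$) is the only place real care is needed; the rest is the averaging computation, which is routine once the weighted-multiset device is in place.
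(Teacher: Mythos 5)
Your proof of Theorem~\ref{tfolem} itself is correct and is essentially the paper's proof in different packaging: the paper's sublinear functional $\bar{m}(f)=\inf_{H}\sup_x\big(f(x)+H(x)\big)$ is exactly the quotient of your $p(f)=\sup_x f(x)$ by the subspace $W$ of functions of the form \eqref{fofol}, and both routes come down to the same Hahn--Banach extension of the zero functional on $W$ dominated by $\sup$. (A small simplification: you need not extend by hand to $W\oplus\rr\un$; once $m\leq p$ on all of $\ell^\infty(X)$, applying this to $\pm\un$ forces $m(\un)=1$ automatically, and $\un\notin W$ is itself a consequence of \eqref{fofol} applied to $-\un$.) The forward direction is the same contrapositive the paper uses.

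The part of your plan that does not work is the deduction of Theorem~\ref{tfolact} from Theorem~\ref{tfolem} by a single averaging step. The hypothesis ($\textsf{I}_{k_0,S}$) with a \emph{fixed} $k_0<1$ only gives an $F$ whose \emph{average} overlap ratio exceeds $k_0$; it never produces an $F$ with each $|\gamma_i F\cap F|$ close to $|F|$, and repeating elements of the multiset cannot improve this precisely because of the scale-invariance you yourself point out. So your computation only yields $\sup_x H(x)\geq -2(1-k_0)\sum_i\|h_i\|_{\ell^\infty}$, a fixed negative bound depending on $k_0$ and the $h_i$, and you cannot let $\eps\to 0$. This is exactly why the paper's proof of Theorem~\ref{tfolact} iterates the convolution with $\un_F/|F|$: each pass multiplies the defect by $(1-k)$, so one gets $\eps\leq(1-k)^m$ for every $m$, which is the contradiction. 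Without that iteration (or some substitute), the weak hypothesis ``some uniform $k_0\in\,]0,1[$'' is not enough, and the whole point of F{\o}lner's criterion --- that a single $k_0$ bounded away from $0$ suffices --- is lost. Since the statement under review is only Theorem~\ref{tfolem}, this gap lies outside it, but as written your route from \eqref{fofol} to Theorems~\ref{tfolact} and~\ref{tfofol} is incomplete.
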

\begin{proof}
If there exists a function $H$ which is written as in \eqref{fofol} and $\sup H(x) \leq -\eps$ for some $\eps>0$, then there can be no invariant mean: $m(H) =0$ (by linearity) but $m(H + \eps \un) = m(H) + \eps \leq 0$ (by positivity).

On the other hand, if $H$ satisfies \eqref{fofol} implies $\sup H(x) \geq 0$, one can define ``explicitly'' an invariant mean as follows. Consider 
\[
\bar{m}(f) = \inf_{H \text{ as in \eqref{fofol}}} \sup_{x \in X} \big( f(x) + H(x) \big). 
\]
It is very easy to check that $\bar{m}(\lambda f) = \lambda \bar{m}(f)$ (for $\lambda \geq 0$), $\bar{m}(\gamma \cdot f) = \bar{m}(f)$, $f \geq 0$ implies that $\bar{m}(f) \geq 0$ and $\bar{m}(\un) =1$. It is quite straightforward (exercise!) that $\bar{m}(f+g) \leq \bar{m}(f) + \bar{m}(g)$. 

To finish, define a linear functional $m$ (with $m(\un) = 1$) associated to $\bar{m}$ using Hahn-Banach's extension theorem. This $m$ will be an invariant mean.
\end{proof}
The classical theorem of F{\o}lner will be taken for granted.
\begin{teo}[F{\o}lner sequences] \label{tfolseq}
The action of a countable group $\Gamma$ on $X$ is amenable if and only if for all $\eps \in ]0,1[$ and for any finite set $S \subset \Gamma$, there exists a finite set $F \subset X$ such that, 
\[
\text{for any } \gamma \in S, \quad \frac{ |\gamma F \cap F|}{|F|} > 1-\eps
\]
\end{teo}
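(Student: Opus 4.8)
The plan is to prove the two implications separately; the reverse one (building F{\o}lner sets from an invariant mean) is the substantial half. For the easy direction, suppose the F{\o}lner condition holds. Enumerate $\Gamma = \{\gamma_1,\gamma_2,\ldots\}$ and, for each $n$, apply the hypothesis with $S = \{\gamma_1,\ldots,\gamma_n\}$ and $\eps = 1/n$ to get a finite $F_n \subseteq X$ with $|\gamma_i F_n \cap F_n| > (1-\tfrac1n)|F_n|$ for all $i \le n$. Putting $f_n = \un_{F_n}/|F_n| \in \ell^1(X)$, this says $\|\gamma\cdot f_n - f_n\|_1 \to 0$ for every fixed $\gamma \in \Gamma$. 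I would then take any weak-$*$ cluster point $m$ of $(f_n)$ in the unit ball of $\ell^\infty(X)^*$ (weak-$*$ compact by Banach--Alaoglu): positivity and $m(\un)=1$ are immediate, and for $f \in \ell^\infty(X)$ one gets $m(\gamma\cdot f - f) = \lim \langle \gamma^{-1}\cdot f_n - f_n, f\rangle = 0$ along the relevant subnet, since $\|\gamma^{-1}\cdot f_n - f_n\|_1 \to 0$. Hence $m$ is an invariant mean.

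For the hard direction, suppose $m$ is an invariant mean on $\ell^\infty(X)$. The first step is to realise $m$ as a weak-$*$ limit of honest probability distributions: let $P$ be the set of nonnegative norm-one elements of $\ell^1(X)$, viewed inside $\ell^\infty(X)^*$; then $m$ lies in the weak-$*$ closure of $P$, by Hahn--Banach separation together with the inequality $m(f) \le \sup_{x\in X} f(x) = \sup_{\phi \in P}\langle \phi, f\rangle$ valid for any mean. Fix a net $\phi_\alpha \in P$ with $\phi_\alpha \to m$. Invariance of $m$ gives, for every $\gamma \in \Gamma$ and every $f \in \ell^\infty(X)$, $\langle \gamma\cdot\phi_\alpha - \phi_\alpha, f\rangle = \langle \phi_\alpha, \gamma^{-1}\cdot f - f\rangle \to 0$, so $\gamma\cdot\phi_\alpha - \phi_\alpha \to 0$ weakly in $\ell^1(X)$. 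The second step upgrades this to norm convergence: fixing a finite $S \subseteq \Gamma$ and $\eps \in ]0,1[$, the set $K = \{(\gamma\cdot\phi - \phi)_{\gamma\in S} : \phi \in P\} \subseteq \ell^1(X)^{\oplus |S|}$ is convex, and $0$ lies in its weak closure; by Mazur's theorem the weak and norm closures of a convex set coincide, so $0$ is in the norm closure, i.e. there is $\phi \in P$ with $\sum_{\gamma\in S}\|\gamma\cdot\phi - \phi\|_1 < \eps$.

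The last step extracts a set by the layer-cake (coarea) formula. Writing $\phi = \int_0^\infty \un_{F_t}\,dt$ with $F_t = \{x \in X : \phi(x) > t\}$ (finite for almost every $t>0$ since $\phi \in \ell^1$), and using $\{x : (\gamma\cdot\phi)(x) > t\} = \gamma F_t$, Tonelli's theorem gives $\|\gamma\cdot\phi - \phi\|_1 = \int_0^\infty |\gamma F_t \triangle F_t|\,dt$ and $\|\phi\|_1 = \int_0^\infty |F_t|\,dt = 1$. Hence
\[
\int_0^\infty \Big( \sum_{\gamma\in S} |\gamma F_t \triangle F_t| \Big)\,dt \;<\; \eps \;=\; \eps \int_0^\infty |F_t|\,dt ,
\]
so for some $t$ we have $|F_t| > 0$ and $\sum_{\gamma\in S} |\gamma F_t \triangle F_t| < \eps |F_t|$; since $|\gamma F_t \cap F_t| \ge |F_t| - |\gamma F_t \triangle F_t|$, the set $F := F_t$ satisfies $|\gamma F \cap F| > (1-\eps)|F|$ for every $\gamma\in S$, which is the claimed condition.

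The main obstacle is the hard direction, and within it the two functional-analytic moves: realising the invariant mean as a weak-$*$ limit of genuine probability distributions, and converting weak-$\ell^1$ almost-invariance into norm almost-invariance via convexity (Mazur). This is exactly the ``difficult part'' flagged above; once it is in hand, the passage from an almost-invariant function to a single almost-invariant set is the routine coarea computation just given. (One could alternatively route the hard direction through Theorem \ref{tfolem}, but the direct $\ell^1$ argument seems cleanest.)
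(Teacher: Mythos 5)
Your proof is correct. Note, however, that the paper deliberately offers no proof of Theorem \ref{tfolseq}: it is introduced with the sentence ``The classical theorem of F{\o}lner will be taken for granted,'' and the only related remark is that the hard implication (invariant mean $\Rightarrow$ F{\o}lner sets) is ``the difficult part in the original proof of F{\o}lner.'' So there is no internal argument to compare yours against; what you have supplied is the standard modern replacement for F{\o}lner's original combinatorial argument, namely Day's convexity trick (a mean is a weak-$*$ limit of $\ell^1$ probability densities; weak almost-invariance upgrades to norm almost-invariance by Mazur) followed by Namioka's layer-cake extraction of a single almost-invariant set. All the steps check out, including the points that are easy to fumble: the identification of weak-$*$ convergence against $\ell^\infty(X)$ with weak convergence in $\ell^1(X)$, the use of the product weak topology on $\ell^1(X)^{\oplus|S|}$ so that Mazur applies to the tuple simultaneously for all $\gamma\in S$, the finiteness of the superlevel sets $F_t$ for $t>0$, and the averaging argument guaranteeing some $t$ with $|F_t|>0$ and $\sum_{\gamma\in S}|\gamma F_t\triangle F_t|<\eps|F_t|$. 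The argument also works verbatim for actions on a set $X$ rather than only for $X=\Gamma$, which is the generality the paper needs for Theorem \ref{tfolact}. The one alternative worth mentioning is the one you flag yourself: the paper's Theorem \ref{tfolem} (the Dixmier condition) could be used to shortcut the Hahn--Banach separation step, but your direct $\ell^1$ route is self-contained and arguably cleaner.
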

One creates [what is now known as] a F{\o}lner sequence by taking a sequence of reals $\eps_n \to 0$ and a sequence of sets $S_n$ increasing to $\Gamma$. 
\begin{proof}[Proof of theorem \ref{tfolact}:] \newcommand{\supd}[1]{\displaystyle\sup_{#1}}
F{\o}lner's Theorem \ref{tfolseq} clearly implies this condition. Introduce, for a finite multiset $S$,
\[ \tag{$\not \hspace*{-.5ex}\textsf{D}_{\eps,S}$} \label{notDepsS}
\exists H \in \ell^\infty(X) \text { with } 
H = \displaystyle\sum_{s \in S} \big(  h_s - \delta_{s}*h_s \big), 
\quad
\displaystyle\vstr  2 |S| \max_{s} \|h_s\|_{\ell^\infty} = 1,
\text{~~~and~~~}
\displaystyle\vstr \sup_x H(x) \leq -\eps.
\]
Recall, for a finite multiset $S$ and $k \in ]0,1[$, the condition
\[ \tag{$\textsf{I}_{k,S}$} \label{iks}
\exists F \subset \Gamma \text{ finite such that } \sum_{\gamma \in S} \frac{|\gamma F  \cap F|}{|S| \, |F|} > k.
\]
The proof goes by contradiction: assume \eqref{notDepsS} holds (for some $\eps>0$ and some $S$) even though there is a $k$ so that \eqref{iks} is true for all $S$. Take $H_0$ as in \eqref{notDepsS}. Let $F$ associated to $S$ as in \eqref{iks}. Define $H_1$ as follows: 
\[
H_1 = \tfrac{1}{|F|} \sum_{s \in S} \un_F * h_s   - \delta_s * \un_F * h_s,
\]
where $\un_F$ is the characteristic function of $F$. This sum is apparently a sum on $|S|\, |F|$ terms of functions with 
\[
2|F|\,|S|\, \max_{s} \|h_s\|_{\ell^\infty} \leq 1 \text{ and } \sup_{x} H_1(x) \leq -\eps.
\]
However, by the choice of $F$, many terms in the sum cancel out: there are at most $2(1-k)|S|\,|F|$ left after cancellations. This means that $\|H_1\|_{\ell^\infty} \leq (1-k)$ and implies that $-(1-k) \leq -\eps$, \ie $\eps \leq 1-k$.

But one can iterate this process: $H_1$ has the same form as $H_0$ (with a different $S$), so one may define a $H_2$ from $H_1$ in a similar manner. This implies $\eps \leq (1-k)^2$. Constructing a sequence $H_m$ in a similar fashion implies $\eps \leq (1-k)^m$. If $k \in ]0,1]$, this contradicts \eqref{notDepsS} (for any $\eps>0$ and any multiset $S$). By Lemma \ref{tfolem}, there is an invariant mean.
\end{proof}

\section{Further comments}

\setcounter{teo}{0}
\renewcommand{\thesubsection}{\thesection.\Alph{subsection}}

It is straightforward to extend the above result to the following case. Let $\Gamma$ be a countable group with a non-amenable action on a countable set $X$. If the Schreier graph of $\Gamma$ acting on $X$ is almost transitive (in the sense of Benjamini \& Schramm \cite[\S{}2]{BS}, for some generating set of $G$) then there is a finite multiset $S$ so that the Schreier graph for $S$ has $p_c < p_u$. For other consequences, see also Thom's paper \cite[\S{}3.2]{Thom}.

\subsection{Thom's improvement}\label{sthom}

It is not completely clear from the proof that one may replace multisets by sets in F{\o}lner's Theorem \ref{tfofol}. One could try to use the techniques A.~Thom used to prove \cite[Theorem 1]{Thom}. In any case, one can also deduce this directly from his theorem (as was mentioned by Juschenko \& Nagnibeda in \cite[Remark 20]{JN}).
\begin{teo}\emph{(Thom \cite[Theorem 1]{Thom})}
If $\Gamma$ is a non-amenable group, then for all $\eps>0$ there exists a generating finite set $S$ such that $\rho(\Gamma,S) < \eps$.
\end{teo}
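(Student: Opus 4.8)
The plan is to establish the statement first for a finite \emph{multiset} $S$ --- which is essentially already contained in the ingredients above --- and then to confront the genuinely harder point, namely that $S$ can be taken to be an ordinary set.

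\emph{Multiset version.} Fix $\eps\in\,]0,1[$ and pick any finite symmetric generating set $S_0$ with $e\in S_0$; since $\Gamma$ is non-amenable, Kesten's theorem gives $\rho_0:=\rho(\Gamma,S_0)<1$. Convolution by $\un_{S_0}$ is then a self-adjoint operator $T$ on $\ell^2(\Gamma)$ with $\|T\|=|S_0|\,\rho_0$, and $T^n$ is convolution by $\un_{S_0}^{\,*n}=\un_{S_0^{*n}}$, where $S_0^{*n}$ is the multiset of all $n$-fold products of elements of $S_0$ (of cardinality $|S_0|^n$). Hence $\rho(\Gamma,S_0^{*n})=\frac{1}{|S_0|^n}\|T^n\|=\frac{1}{|S_0|^n}\|T\|^n=\rho_0^{\,n}$, which is $<\eps$ once $n$ is large, and $S_0^{*n}$ generates because it contains $S_0$ (pad with $e\in S_0$). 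Alternatively, staying closer to the tools of this note, one can take $S$ from Corollary \ref{thiks-c} with parameter small enough that Mohar's Theorem \ref{tmoh} forces $\rho(\Gamma,S)\le\sqrt{1-h(\Gamma,S)^2}<\eps/2$, inflate $S$ by repeating each element equally often (which leaves $\rho(\Gamma,S)$ unchanged) until $|S|$ dominates the cardinality of a fixed finite generating set $S_1$, and pass to the multiset $S\cup S_1$; the triangle inequality $\big\|\sum_{s\in S\cup S_1}\lambda_s\big\|\le|S|\,\rho(\Gamma,S)+|S_1|$ then keeps the spectral radius below $\eps$ while making the multiset generate.

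\emph{From multisets to sets.} The remaining point --- and, I expect, the only substantial one --- is that $S$ can be chosen without repetitions. None of the constructions above achieves this: they produce multisets with large multiplicities, and in a discrete group there is no direct way to pry apart the repeated copies of an element. The natural strategy is probabilistic: starting from a multiset (or set) $S_*$ with $|S_*|$ large and $\rho(\Gamma,S_*)$ small, pass to a random sub-collection --- obtained either by deleting elements independently or by choosing a uniformly random subset of prescribed size --- and show that with positive probability the normalized operator $\frac{1}{|S|}\sum_{s\in S}\lambda_s$ still has small norm and $S$ still generates. The hard part is the norm estimate: one would bound a high even moment such as $\mathbb{E}\,\tr\!\big[\big(\tfrac{1}{|S|}\sum_{s\in S}\lambda_s\big)^{2r}\big]$, or invoke a noncommutative concentration inequality, while simultaneously keeping $|S|$ of the same order as $|S_*|$ so that the normalization still does its job. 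This is exactly the extra input carried out in \cite{Thom}; it is the one place where an argument genuinely beyond the soft ``connecting the dots'' of the rest of this note is needed, and for the details the reader is referred to Thom's paper.
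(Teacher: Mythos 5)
Your multiset arguments are both correct: the power construction (with $S_0$ symmetric, $e\in S_0$, so that $T=\sum_{s\in S_0}\lambda_s$ is self-adjoint and $\|T^n\|=\|T\|^n$ gives $\rho(\Gamma,S_0^{*n})=\rho_0^{\,n}$) and the alternative via Corollary \ref{thiks-c} and Theorem \ref{tmoh} followed by dilution with a fixed generating set. But this is precisely the Nagnibeda--Pak statement \cite{PN} already discussed in the introduction, i.e.\ the version of the theorem for \emph{multisets}, which is also all that the rest of this note actually uses.

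As a proof of the theorem as stated there is a genuine gap, and you name it yourself: the assertion is that $S$ may be taken to be a generating \emph{set}, and that upgrade is the entire content of Thom's result. Your paragraph ``From multisets to sets'' is only a programme --- sample a random sub-collection of a well-spread multiset, bound $\mathbb{E}\,\tr\big[\big(\tfrac{1}{|S|}\sum_{s\in S}\lambda_s\big)^{2r}\big]$ or invoke a noncommutative concentration inequality, keep $|S|$ comparable to $|S_*|$, and arrange generation --- with no estimate actually carried out; the argument ends by deferring to \cite{Thom}. For what it is worth, the paper does exactly the same: this theorem appears in \S{}\ref{sthom} purely as an imported black box, with no proof offered. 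So your write-up is not worse than the paper's treatment, and it adds a correct self-contained proof of the multiset version; but on its own it does not prove the statement, since the multiset-to-set passage is the one step that cannot be obtained by rearranging the soft ingredients of this note.
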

Thus, using Theorem \ref{tmoh}, if $\Gamma$ is non-amenable then $\forall \eps>0$, there exists a finite generating set $S$ with
\[
h(\Gamma,S) \geq (1 - \eps) \frac{|S|}{|S|-1} \geq 1-\eps.
\]
\begin{cor}\label{tfolset}
If there exists $k \in ]0,1[$ such that, for all generating sets $S$, 
\[\tag{$\textsf{I}_{k,S}$}
\exists F \subset \Gamma \text{ finite such that } \sum_{\gamma \in S} \frac{|\gamma F  \cap F|}{|S| \, |F|} > k.
\]
then $\Gamma$ is amenable.
\end{cor}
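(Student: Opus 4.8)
The statement to prove is Corollary~\ref{tfolset}: the contrapositive direction of Thom's-improved F{\o}lner criterion, restricted to generating sets. So if $\Gamma$ is non-amenable, we want a generating set $S$ violating $(\textsf{I}_{k,S})$.

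The plan is to argue by contraposition, exactly mirroring the proof of Corollary~\ref{thiks-c}. Suppose $\Gamma$ is non-amenable. By Thom's Theorem (quoted just above), for every $\eps \in ]0,1[$ there is a finite \emph{generating} set $S$ with $\rho(\Gamma,S) < \eps$. Feeding this into the second inequality of Mohar's Theorem~\ref{tmoh}, $h(\Gamma,S) \geq (1-\rho(\Gamma,S))\tfrac{|S|}{|S|-1} \geq 1 - \eps$. Then, just as in the proof of Corollary~\ref{thiks-c}, unwind the definition of $h(\Gamma,S)$: the bound $h(\Gamma,S) \geq 1-\eps$ says $\tfrac{1}{|S|}\sum_{s\in S}|sF\setminus F| \geq (1-\eps)|F|$ for every finite $F$, which is equivalent to $\tfrac{1}{|S|}\sum_{s\in S}|sF\cap F| \leq \eps|F|$ for every finite $F$, i.e.\ $(\textsf{I}_{k,S})$ fails for this generating $S$ whenever $k \geq \eps$. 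Taking $\eps < k$ produces, for the given $k$, a generating set $S$ for which $(\textsf{I}_{k,S})$ has no witness $F$ — contradicting the hypothesis. Hence $\Gamma$ is amenable.

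Two minor points need care. First, the action-on-$X$ versus action-on-$\Gamma$ discrepancy: Thom's theorem and Mohar's inequality as quoted are about the Cayley graph (i.e.\ $X=\Gamma$), whereas $(\textsf{I}_{k,S})$ in the statement is phrased with $F\subset X$. Since the corollary is really the ``group'' (left-regular) case, one should read $X=\Gamma$ throughout, so no genuine issue arises; I would simply note that $h(\Gamma,S)$ is computed over finite $F\subset\Gamma$ and this is precisely the quantity controlled by Mohar. Second, one must check $|S|\geq 2$ so that $\tfrac{|S|}{|S|-1}$ makes sense and is $\geq 1$; this is automatic for a generating set of a non-amenable (hence infinite, non-cyclic) group, or can be arranged by enlarging $S$.

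I do not expect a real obstacle here — the corollary is, as the paper says about Theorem~\ref{leteo} itself, a matter of ``connecting the dots.'' The only thing resembling a difficulty is making sure the direction of the inequality is tracked correctly through the chain $\rho < \eps \Rightarrow h > 1-\eps \Rightarrow |sF\cap F|$ small, and confirming that ``$(\textsf{I}_{k,S})$ holds for all generating $S$'' is genuinely the negation of ``some generating $S$ has $h(\Gamma,S)\geq 1-k$'' — which it is, by the same equivalence used in Corollary~\ref{thiks-c}. So the write-up is essentially: invoke Thom, invoke Mohar, unwind $h$, pick $\eps<k$.
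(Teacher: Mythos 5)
Your proposal is correct and follows essentially the same route as the paper: invoke Thom's theorem to get a generating set with small spectral radius, pass to $h(\Gamma,S)\geq 1-\eps$ via Mohar's inequality, and use the equivalence from Corollary~\ref{thiks-c} between $h(\Gamma,S)\geq 1-k$ and the failure of $(\textsf{I}_{k,S})$ to conclude by contraposition. Your two side remarks (reading $X=\Gamma$ and $|S|\geq 2$) are harmless and correctly resolved.
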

\begin{proof}
Indeed, by corollary \ref{thiks-c}, $h(\Gamma,S) \geq 1-k \iff $ \eqref{niks}. One gets
\[
\Gamma \text{ non-amenable } \implies \forall k \in ]0,1[, \exists \text{ a finite generating } S \subset \Gamma \text{ such that \eqref{niks} holds.}
\]
This means 
\[
\exists k \in ]0,1[ \text{ such that } \forall \text{ finite generating } S \subset \Gamma, \text{ \eqref{iks} holds} \implies \Gamma \text{ amenable}. \qedhere
\]
\end{proof}
For many other interesting open problems whose solution is expected to come soon, see Sapir's problem list \cite{Sapir}.

\subsection{Historical notes}

It is worth noting that Banach proved the extension theorem in order to show the existence of invariant means on Abelian groups. The definition he used was $\bar{m}(f) = \inf_{\mu} \sup_x (\mu * f) (x)$ where the infimum runs over finitely supported probability measures on $\Gamma$. Though the proof of F{\o}lner's criterion given above mimics the original one, it can also be adapted to show that Banach's definition also gives a semi-norm $\bar{m}$.

F{\o}lner's theorem \ref{tfofol} has been only cited once (to the author's knowledge): Kesten \cite{Kes2} uses it to show the equivalence between spectral radius $1$ and amenability. It seems likely that the subsequent simplifications of these proofs and/or lack of interest in combinatorial criterion left F{\o}lner's ``other'' criterion in oblivion. Variations of F{\o}lner's ``usual'' condition seem to have been relatively infrequent, \eg Emerson \& Greenleaf \cite{EG74} give a more restrictive assumption (rather than a weaker one). The ``usual'' F{\o}lner condition for actions of locally compact amenable groups can be found in Greenleaf \cite{Greenleaf69}.

Note that corollary \ref{thiks-c} shows that $h(\Gamma,S) \geq 1-k \iff $\eqref{niks}. The proof of Theorem \ref{tfolact} shows:
\[
\text{\eqref{notDepsS}} \implies \text{\eqref{niks} with } k = 1-\eps \iff  h(\Gamma,S) \geq \eps.
\]
To show the existence of F{\o}lner sequences, F{\o}lner \cite[\S{}5]{Fol55} actually shows:
\[
h(\Gamma,S) \geq 1-k >0 \iff \text{\eqref{niks}} \implies \text{\eqref{notDepsS} with } \eps = \frac{1-k}{2|S|}.
\]
As mentioned in the introduction, Theorem \ref{leteo} is due to \cite{PN}. They show that (in a non-amenable group) some multiset, obtained by taking arbitrarily large powers of some generating set has arbitrarily small spectral radius. In \cite[Theorem 10, Corollary 11 and Corollary 13]{JN}, this was improved to sets for a large class of non-amenable groups (\eg those admitting a normal subgroup $N \lhd \Gamma$ with $\Gamma/ N$ amenable). The full generality is due to Thom in \cite{Thom}.

\subsection{Paradoxical decompositions}

The following remark is a small digression on F{\o}lner's proof that the free group is not amenable, \cite[Theorem 6]{Fol54}. It is essentially in von Neumann's original paper \cite[Hilfsatz 2 on p.90]{vN}.
\begin{rmk} 
It is fairly easy to see that condition \eqref{notDepsS} holds if $X$ admits a paradoxical decomposition. These decompositions are given by disjoint subsets $A_1, A_2, \ldots, A_s$, $B_1, B_2, \ldots, B_t$ and $C$ of $X$ as well as elements $\{\gamma_i\}_{i=1}^s$ and $\{\eta\}_{j=1}^t$ of $\Gamma$ such that 
\[
X = \Big( \sqcup_{i=1}^s A_i \Big) \sqcup \Big( \sqcup_{j=1}^t B_j \Big) \sqcup C = \sqcup_{i=1}^s \gamma_i A_i = \sqcup_{j=1}^t \eta_j B_j.
\]
One may assume $\gamma_1$ and $\eta_1$ are the identity. The Tarski number of $X$ (for the action of $\Gamma$), $\tau(X)$, is the minimal value of $s+t$ for all such decompositions. Let 
\[
(s+t -2) H(x) = \sum_{i=2}^s ( \un_{A_i} - \gamma_i^{-1} \un_{A_i}) + \sum_{j=2}^t ( \un_{B_i} - \eta_i^{-1} \un_{B_i}).
\]
Then $\sup_x H(x) \leq -(s+t-2)^{-1}$. Thus \eqref{notDepsS} holds with $\eps \geq \tfrac{1}{2}(\tau(X)-2)^{-1}$ and $S = \{\gamma_i\}_{i=2}^s \cup \{\eta_j\}_{j=2}^t$.
This gives a lower bound on conductance in term of the Tarski number: $h(X,S) \geq \tfrac{1}{2}(\tau(X)-2)^{-1}$, compare with the upper bound from Ceccherini-Silberstein, Grigorchuk \& de la Harpe \cite[54.Proposition]{CSGdlH}
\end{rmk}


\subsection{Weaker version of F{\o}lner's criterion for actions}\label{serrat}

Here is a stronger version of Theorem \ref{tfolseq} (because it requires something weaker than a F{\o}lner sequence).
\begin{teo}[``Weak'' F{\o}lner sequences] \label{twfolseq}
The action of a countable group $\Gamma$ on $X$ is amenable if and only if there is a $\eps \in ]0,1[$ so that for any finite set $S \subset \Gamma$, there exists a finite set $F \subset X$ such that, 
\[
\text{for any } \gamma \in S, \quad \frac{ |\gamma F \cap F|}{|F|} > 1-\eps
\]
\end{teo}
Here is a reformulation in term of F{\o}lner sequences:
if there is a $c \in [0,2[$ and a sequence $F_n \subset X$ so that 
\[
 \text{for any } \gamma \in \Gamma, \quad \liminf_{n \to \infty} \frac{ |\gamma F_n \cap F_n|}{|F_n|} \leq c
\]
then the action is amenable.
\begin{proof}
The $(\implies)$ part of the proof is standard (these are the traditional F{\o}lner sets), so only the $(\Leftarrow)$ will be done here.
The proof consists in showing that the $\ell^2$-representation of the action of $\Gamma$ on $X$ has a sequence of almost-fixed vectors. Let $\pi$ be this representation, \ie $\pi_g f(x) = f( g^{-1} x)$.

Let $S_n$ be a sequence of sets which increases to $\Gamma$ and let $F_n \subset$ be the corresponding sets.
Consider the $\ell^2$-normalised characteristic functions of the sequence $F_n$: $\xi_n = \tfrac{1}{\sqrt{|F_n|}} \un_{F_n} \in \ell^2(X)$.
By hypothesis, given $\gamma \in \Gamma$, then, for $n$ large enough, 
\[
\langle \pi_g \xi_n | \xi_n \rangle 
= \sum_{x \in X} \frac{\un_{F_n} (\gamma^{-1}x) \un_{F_n}(x)}{|F_n|}  
= \frac{\gamma F_n \cap F_n}{|F_n|} \geq 1-\eps > 0
\]
Consider an free ultrafilter $\omega$ on the natural numbers as well as the ultraproduct representation $\pi^\omega$ on $(\ell_2X)^\omega$.
The sequence $\xi_n$ represents a unit vector in $(\ell_2X)^\omega$. For this vector,
\[
\langle \pi^\omega_g \xi | \xi \rangle \geq 1-\eps > 0 
\]
for any $\gamma \in \Gamma$. This means that the convex hull $K$ of the orbit of $\xi$ (under $\pi^\omega$) is separated from $0$ (by $\xi$ itself).
Let $\eta$ be the element of minimal norm in this convex hull $K$. The existence and uniqueness of $\eta$ follows from convexity of $K$ and of the $\ell^2$-norm. By uniqueness, $\pi^{\omega}_g \eta = \eta$.

But $\eta$ can be represented by a sequence $\eta_n \in \ell_2X$. Since $\eta$ is a fixed point of $\pi^\omega$, $\|\pi_g^\omega \eta- \eta\|_2 =0$. This implies that for any $\eps>0$, for any finite $F \subset X$ and any finite $S \subset G$ there are infinitely many $n$ `such that  
\[
 \| \pi_g \eta_n - \eta_n \|_2^2 \leq \eps.
\]
This shows that the representation of the action contains almost fixed vectors and implies the action is amenable.
\end{proof}
Note that the difference between Theorem \ref{tfolact} and Theorem \ref{twfolseq} is that in the former, on requires that on average the ratio $>1-\eps$ while in Theorem \ref{twfolseq} it need to be $>1-\eps$ for any $\gamma \in S$.

Lastly, these two theorems raise the question:
\begin{ques}\label{qcorr}
Let $G$ be a countable acting on $X$.
Assume there is a $\eps \in ]0,1[$ so that for any finite multiset $S \subset \Gamma$, there exists a finite set $F \subset X$ such that, 
\[
\frac{1}{|S|} \sum_{\gamma \in S} \frac{ |\gamma F \cap F|}{|F|} > 1-\eps
\]
\end{ques}
The author currently believes to be negative, but could not find any example.


\end{document}